\def\smart{0} 
\numberwithin{equation}{section}
\theoremstyle{plain}
\newtheorem{theorem}{Theorem}[section]
\theoremstyle{definition}
\newtheorem{definition}{Definition}[section]
\newtheorem{remark}{Remark}[section]
\newtheorem{case[theorem]}{Case}
\numberwithin{equation}{section}
\newcommand{\beql}[1]{\begin{equation}\label{#1}}
\newcommand{\eeq}{\end{equation}}
\newcommand{\comment}[1]{}
\newcommand{\Abs}[1]{{\left|{#1}\right|}}
\newcommand{\Mean}{{\mathbb{E}}}
\newcommand{\Floor}[1]{{\left\lfloor{#1}\right\rfloor}}
\newcommand{\Prob}[1]{{{\mathbb{P}}\left[{#1}\right]}}
\newcommand{\Set}[1]{{\left\{{#1}\right\}}}
\newcommand{\RR}{{\mathbb R}}
\newcommand{\NN}{{\mathbb N}}
\newcommand{\diam}{{\rm diam\,}}
\begin{document}
	
\title{Sets of full measure avoiding Cantor sets}

\author{Mihail N. Kolountzakis}
\address{Department of Mathematics and Applied Mathematics, University of Crete, Voutes Campus, 70013 Heraklion, Crete, Greece.}
\email{kolount@uoc.gr}

\date{September 13, 2022}

\thanks{Supported by the Hellenic Foundation for Research and Innovation, Project HFRI-FM17-1733 and by University of Crete Grant 4725.
Part of this work was done while the author visited the Ghent Analysis and PDE Center, at the University of Ghent, whose support is gratefully acknowledged.}

\begin{abstract}
In relation to the Erd\H os similarity problem (show that for any infinite set $A$ of real numbers there exists a set of positive Lebesgue measure which contains no affine copy of $A$) we give some new examples of infinite sets which are not universal in measure, i.e. they satisfy the above conjecture. These are symmetric Cantor sets $C$ which can be quite thin: the length of the $n$-th generation intervals defining the Cantor set is decreasing almost doubly exponentially. Further, we achieve to construct a set, not just of positive measure, but of \textit{full measure} not containing any affine copy of $C$. Our method is probabilistic.
\end{abstract}

\subjclass[2020]{28A80, 05D40}

\keywords{Erd\H os similarity problem, Euclidean Ramsey theory, Probabilistic Method}

\maketitle

\tableofcontents

\setlength{\parskip}{0.5em}

\sloppy


\section{Introduction}

\subsection{The Erd\H os similarity problem}
We are interested in is the so-called Erd\H os similarity problem:  given a set $A \subseteq \RR$ when can we find a Lebesgue measurable set $E$ \textit{of positive measure} which contains no affine copy, $x+tA$, of the set $A$ (where $x, t \in \RR, t \neq 0$).

\begin{definition}
Let us call a set $A \subseteq \RR$ \textit{universal in measure} if every Lebesgue measurable set $E \subseteq \RR$ of positive measure contains an affine copy of $A$.
\end{definition}

Obviously any unbounded set $A$ can be avoided by the interval $[0, 1]$, so we assume from now on that $A$ is bounded and, to simplify matters more, that $A \subseteq [0, 1]$.

It is easy to see, looking near a point of density of a set $E$ of positive measure, that every finite set $A$ is universal in measure. Erd\H os has conjectured that no infinite set is universal in measure. This is still open apart from special cases \cite{eigen1985putting,falconer1984problem,kolountzakis1997infinite,bourgain1987construction,gallagher2022topological,humke1998visit,komjath1983large}. See also the survey \cite{svetic2000erdHos} and the related papers \cite{cruz2022large,denson2021large,fraser2018large,maga2011full,mathe2017sets,shmerkin2017salem,yavicoli2021large,bradford2022large,kolountzakis2022large,burgin2022large}.

To prove that there are no infinite universal sets it would suffice to prove that no countable set of the form
$$
A = \Set{a_1 > a_2 > a_3 > \cdots}, \text{ with } a_n \to 0,
$$
is universal. This is known under several conditions on $a_n$ which prevent $a_n$ from converging too rapidly. For instance \cite{eigen1985putting,falconer1984problem} it is known that $A$ is not universal if
\beql{slow}
\frac{a_{n+1}}{a_n} \to 1.
\eeq
In contrast, it is still unknown if the sequence $2^{-n}$ or any other exponentially decreasing sequence is universal. In almost all the existing work on this problem the rapid decay of the sequence $a_n$ presents a problem\footnote{The only exception we know is Theorem 2 in \cite{kolountzakis1997infinite}, where one passes to a fast decaying subsequence in order to exploit the pseudo-random properties of its dilates modulo a fixed length and thus construct a set $E$ of positive measure which does not contain $x+tA$ for \textit{almost all $t \in \RR$}.} and exponential decay is the borderline case that nobody seems to know how to handle.

In \cite{kolountzakis1997infinite} the following result was proved, which easily implies non-universality under condition \eqref{slow}, but is somewhat more flexible especially when $A$ does not have the structure of a convergent sequence.
\begin{theorem}[\cite{kolountzakis1997infinite}]\label{th:old}
Let $A\subseteq\RR$ be an infinite set which contains, for
arbitrarily large $n$, a subset $\{a_1,\ldots,a_n\}$ with
$a_1 > \cdots > a_n > 0$ and
\begin{equation}\label{eq:delta-assumption}
-\log\delta_n = o(n),
\end{equation}
where\footnote{In \cite{kolountzakis1997infinite} the denominator in \eqref{delta} is $a_1$, not $a_1-a_n$, but since we can translate $A$ without changing the problem, these are equivalent formulations. See also \cite{chlebik2015erdos}.}
\beql{delta}
\delta_n = \min_{i=1,\ldots,n-1} {a_i - a_{i+1} \over a_1-a_n}.
\eeq
Then $A$ is not universal in measure.
\end{theorem}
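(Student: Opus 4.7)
The plan is probabilistic: construct a random set $E$ of positive Lebesgue measure and show, via a Borel--Cantelli argument, that almost surely no affine copy of $A$ is contained in $E$. Fix sequences $p_k \downarrow 0$ and $\beta_k \in (0,1)$ with $\sum_k \beta_k < \infty$. For each $k$, partition $\RR$ into blocks $B_i^{(k)} = [ip_k, (i+1)p_k)$, $i \in \ZZ$, and, independently across $(i,k)$, \emph{kill} each block with probability $\beta_k$. Let $E_k$ be the union of the surviving scale-$k$ blocks and set $E = [0,1] \cap \bigcap_k E_k$. By Fubini, $\Mean|E| = \prod_k(1-\beta_k) > 0$, so $|E| > 0$ almost surely.

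For $j \ge 1$ I will rule out all copies $x + tA$ with $t$ in the dyadic range $I_j := [2^{-j}, 2^{1-j}]$ using a single carefully chosen scale $k_j$. Pick an $n_j$-point subset $A_{n_j} = \{a_1 > \cdots > a_{n_j}\} \subseteq A$ from the hypothesis, with diameter $D_{n_j}$ and minimum gap $\delta_{n_j} D_{n_j}$, and choose $k_j$ so that $p_{k_j} = 2^{-j} \delta_{n_j} D_{n_j}/2$. Then for $t \in I_j$ the $n_j$ points $x + ta_i$ are separated by more than $p_{k_j}$, so they lie in $n_j$ distinct scale-$k_j$ blocks; by independence of block fates,
$$
\Prob{x + tA_{n_j} \subseteq E_{k_j}} = (1 - \beta_{k_j})^{n_j} \le e^{-n_j \beta_{k_j}}.
$$
Partitioning $[0,1] \times I_j$ by the $O(n_j/p_{k_j})$ lines $x + ta_i = m p_{k_j}$ yields $N_j = O(n_j^2/p_{k_j}^2)$ cells on each of which this event is constant, so a union bound gives
$$
\Prob{\exists (x,t) \in [0,1] \times I_j : x + tA_{n_j} \subseteq E} \le N_j \, e^{-n_j \beta_{k_j}}.
$$

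Using $-\log \delta_n = o(n)$ we can choose $n_j \to \infty$ so rapidly that $\log N_j = O(j + \log(1/\delta_{n_j}) + \log(1/D_{n_j}) + \log n_j) \le n_j/j^2$. Setting $\beta_{k_j} = C \log N_j / n_j$ for a large enough $C$, and $\beta_k = 0$ otherwise, gives $\beta_{k_j} = O(1/j^2)$, hence $\sum_k \beta_k < \infty$ (so $|E| > 0$), and $N_j \, e^{-n_j \beta_{k_j}} = O(j^{-2})$, so Borel--Cantelli implies: almost surely, for all large $j$, no copy $x + tA_{n_j}$ with $t \in I_j$ lies in $E$. Since $A_{n_j} \subseteq A$, this rules out copies $x + tA$ for those $t$; finitely many dyadic ranges at bounded $t$ are handled analogously. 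The main obstacle is the three-way balance between $\sum_k \beta_k < \infty$ (needed for positive measure), the discretization count $N_j$ (polynomial in $2^j$ and $1/\delta_{n_j}$), and the growth of $n_j$ required for $n_j \beta_{k_j}$ to dominate $\log N_j$; the hypothesis $-\log \delta_n = o(n)$ is precisely what makes this balance possible and is, incidentally, the reason this probabilistic approach does not reach exponentially decaying gaps.
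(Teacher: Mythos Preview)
This theorem is only \emph{cited} in the present paper (from \cite{kolountzakis1997infinite}); it is not reproved here. Your approach---random block removal at a sequence of scales, followed by a union bound over a discretized $(x,t)$ parameter space---is indeed the probabilistic method of that reference, and it is also the template that \S\ref{sec:main} of this paper adapts to prove Theorem~\ref{th:main}. So the strategy is right.

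There is, however, a genuine gap. You write
\[
\log N_j \;=\; O\bigl(j + \log(1/\delta_{n_j}) + \log(1/D_{n_j}) + \log n_j\bigr)\;\le\; n_j/j^2
\]
and justify this by ``choosing $n_j\to\infty$ rapidly,'' invoking only $-\log\delta_n=o(n)$. But the hypothesis gives you \emph{no} control over the diameters $D_{n}=a_1-a_{n}$ of the given subsets: $\delta_n$ is a scale-invariant ratio, while $D_n$ is not. Nothing prevents $-\log D_n$ from growing faster than $n$. Concretely, take $A=\bigcup_m A_m$ where each $A_m$ is an $m$-term arithmetic progression sitting inside $[2^{-2^m},\,2\cdot 2^{-2^m}]$. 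Then for the obvious $n$-point subsets one has $\delta_n\sim 1/n$, so $-\log\delta_n=o(n)$ and the hypothesis holds, yet $D_n=2^{-2^n}$ and $\log(1/D_n)=2^n\log 2$, which swamps $n_j/j^2$ for any choice of $n_j$. Your inequality simply fails for such $A$.

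The repair is not difficult but is missing from your write-up: since affine copies of $A_n$ coincide with affine copies of its normalization $B_n:=(A_n-a_n)/D_n\subseteq[0,1]$, one should discretize in the $(y,s)$-variables with $s=tD_n$ and assign the sets $B_n$ to dyadic $s$-ranges rather than assigning the raw $A_{n_j}$ to dyadic $t$-ranges; this removes the $\log(1/D_n)$ term from $\log N$. One must then check that the resulting $t$-ranges still cover $(0,1]$, which needs a slightly more careful (possibly non-injective) assignment $m\mapsto n(m)$. Alternatively, in all the standard applications (a sequence $a_k\downarrow 0$, Cantor endpoints as in Theorem~\ref{th:cantor-simple}) one has $D_n$ bounded below and the issue evaporates---but that is an extra hypothesis, not a consequence of \eqref{eq:delta-assumption}.

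Two smaller points: $\Mean|E|>0$ gives $|E|>0$ with \emph{positive probability}, not almost surely (which is all you need); and the clause ``finitely many dyadic ranges at bounded $t$ are handled analogously'' is doing real work---after Borel--Cantelli you only control large $j$, and the finitely many exceptional $j$ (and the ranges $|t|\ge 1$, $t<0$) should be dispatched explicitly, e.g.\ by adding finitely many further independent layers and intersecting.
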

We can see now that any set $A$ of positive Lebesgue measure is not universal. Indeed for any $n$ we can find an affine image of $\Set{1, 2, \ldots, n}$ in $A$, since every finite set is universal, as explained above. Now we apply Theorem \ref{th:old} for this set to obtain that $A$ is not universal.

\subsection{Uncountable and Cantor sets}

It makes sense to ask the Erd\H os similarity problem under the additional assumption that the set $A$ (all of whose affine copies are to be avoided by a set of positive measure) is not just infinite but even \textit{uncountable}. No one really knows of a way to take advantage of this cardinality to show non-universality. We will however be able to say more about an important class of uncountable sets, Cantor sets.

Let
$$
C = \bigcap_{n=0}^\infty C_n \subseteq [0, 1]
$$
be a \textit{symmetric} Cantor set defined as follows. We have $C_0 = [0, 1]$. The sequence $C_n$ will be a decreasing sequence and each $C_n$ is a finite union of $2^n$ disjoint closed intervals of equal length $\ell_n$. From $C_{n-1}$ we derive $C_n$ by visiting each interval of $C_{n-1}$ and removing a middle open interval\footnote{The case when one allows $d_n \le \ell_n$ leads to ``fatter'' sets which can be handled with the methods in \cite{gallagher2022topological} so we decided to exclude them from discussion in this paper to avoid unnecessary complications.} of length $d_n > \ell _n$. We write $r_n = \frac{\ell_n}{2\ell_n+d_n} = \frac{\ell_n}{\ell_{n-1}}$. It follows that $\ell_n = r_1 r_2 \cdots r_n$.

Denote by $L_n$ the set of left endpoints of the intervals in $C_n$ and by $R_n$ the set of right endpoints. We have $\Abs{L_n} = \Abs{R_n} = 2^n$ and both $L_n$ and $R_n$ are subsets of $C$. We also have $L_n \subseteq L_{n+1}$ and $R_n \subseteq R_{n+1}$.

In \cite{bourgain1987construction} it is proved that for any infinite sets $S_0, S_1, S_2 \subseteq \RR$ the set $S_0+S_1+S_2$ is not universal. Using this result we can prove that symmetric Cantor sets are not universal. Indeed we have $L_0 = \Set{0}$ and, for $n \ge 0$,
$$
L_{n+1} = L_n + \Set{0, d_{n+1}+\ell_{n+1}}.
$$
If we denote by $L = \bigcup_{n=0}^\infty L_n \subseteq C$ the set of all left interval endpoints in all stages of the construction, we have
\begin{align*}
L &= \Set{0, d_1+\ell_1} + \Set{0, d_2+\ell_2} + \Set{0, d_3+\ell_3} + \cdots\\
  &= S_0 + S_1 + S_2,
\end{align*}
where
$$
S_j = \bigoplus_{i \equiv j \bmod 3} \Set{0, d_i+\ell_i},\ \ \ j=0, 1, 2.
$$
Since the sets $S_j$ are infinite, we obtain by the mentioned result of \cite{bourgain1987construction} that $L$, and therefore $C$, is not universal\footnote{Pointed out to us by the referee.}.

Let us now show how using Theorem \ref{th:old} we can immediately prove the following, which shows how the (nearly) double exponential decay of $\ell_n$ comes into the picture with our method. This happens because the structure of a Cantor set, even if one includes only the endpoints $\bigcup_n (L_n \cup R_n)$, is much richer than that of a sequence.
\begin{theorem}\label{th:cantor-simple}
If $C$ is a symmetric Cantor set with $\ell_n < d_n$ and there is a subsequecnce $n_k \in \NN$ such that 
\beql{elln}
-\log\ell_{n_k} = o(2^{n_k})
\eeq
then $C$ is not universal.
\end{theorem}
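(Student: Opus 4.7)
The plan is to apply Theorem \ref{th:old} directly to the sets of endpoints of the Cantor construction. Since $L_n\subseteq C$ and $|L_n|=2^n$, producing a large finite subset of $C$ with good separation is straightforward; everything reduces to matching the parameters between the Cantor construction and the hypothesis $-\log\delta_N=o(N)$ of Theorem \ref{th:old}.

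Concretely, for each $n$ in the given subsequence $n_k$, I would take the finite subset $L_n = \{\lambda_1 < \lambda_2 < \cdots < \lambda_{2^n}\} \subseteq C$ and set $N := 2^n$. Two consecutive points $\lambda_i,\lambda_{i+1}$ of $L_n$ are left endpoints of two adjacent intervals of $C_n$; the closest case occurs when these are sibling intervals (arising from the same parent in $C_{n-1}$), and then
\[
\lambda_{i+1}-\lambda_i \;=\; \ell_n + d_n \;\geq\; 2\ell_n,
\]
using the standing assumption $d_n>\ell_n$. For non-sibling neighbors the gap is only larger, since one must cross a deleted interval of length $d_m$ for some $m<n$. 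The total range satisfies $\lambda_{2^n}-\lambda_1 \leq 1$ because $C\subseteq[0,1]$. Hence, in the notation of Theorem \ref{th:old} applied to $L_n$ listed in decreasing order,
\[
\delta_N \;\geq\; \frac{\ell_n+d_n}{\lambda_{2^n}-\lambda_1} \;\geq\; 2\ell_n.
\]

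Taking logarithms gives $-\log\delta_N \leq -\log\ell_n - \log 2$. Since $N=2^{n_k}$ along the chosen subsequence, the hypothesis \eqref{elln} yields
\[
\frac{-\log\delta_N}{N} \;\leq\; \frac{-\log \ell_{n_k} - \log 2}{2^{n_k}} \;\longrightarrow\; 0,
\]
so $-\log\delta_N = o(N)$ for $N$ ranging over the values $2^{n_k}$. Thus $C$ contains finite subsets of arbitrarily large cardinality satisfying \eqref{eq:delta-assumption}, and Theorem \ref{th:old} immediately produces a set of positive Lebesgue measure avoiding every affine copy of $C$.

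There is no real obstacle here: the only arithmetic to check is the lower bound $\lambda_{i+1}-\lambda_i\geq \ell_n+d_n$ on the minimum gap in $L_n$, which comes from the very definition of $C_n$ and uses the assumption $d_n>\ell_n$ to guarantee $\delta_N\gtrsim\ell_n$. The exponential blow-up of $|L_n|$ versus the gap size $\ell_n$ is precisely what produces the doubly-exponential flavor in the conclusion: to convert the condition $-\log\delta_N=o(N)$ (linear in $N$) into a condition on $\ell_n$, one substitutes $N=2^n$, which is exactly the content of \eqref{elln}.
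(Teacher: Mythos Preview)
Your proof is correct and is essentially the paper's own argument, carried out with more detail: both apply Theorem \ref{th:old} to the endpoint set $L_{n_k}$, using that $\Abs{L_{n_k}}=2^{n_k}$ and that the minimal gap in $L_{n_k}$ is $\ell_{n_k}+d_{n_k}>\ell_{n_k}$, so that $-\log\delta_N \le -\log\ell_{n_k}=o(2^{n_k})=o(N)$. One small remark: your assertion that the non-sibling gaps $\ell_n+d_m$ (for $m<n$) are ``only larger'' than the sibling gap $\ell_n+d_n$ tacitly uses that $d_m>d_n$, which does follow from the standing hypothesis $d_j>\ell_j$ (since then $\ell_{j-1}>3\ell_j$ and hence $d_j-d_{j+1}=\ell_{j-1}-3\ell_j+2\ell_{j+1}>0$), but this is also left implicit in the paper's proof and in any case your bound $\delta_N\ge 2\ell_n$ is all that is needed.
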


\begin{remark}
Let us make clear here that Theorem \ref{th:cantor-simple} is suboptimal, compared to what one can prove as a corollary of Bourgain's result in \cite{bourgain1987construction}, as it does not work for all symmetric Cantor sets but only for those that are thinning out sufficiently slowly according to \eqref{elln}. The proof however is rather simple, following easily from Theorem \ref{th:old} and it demonstrates the fact that, for our method to work, one generally needs bounds on the decay. Additionally, the use of Bourgain's result is \textit{fragile}, as it depends on the algebraic properties of the set. Indeed, if one relaxes the definition of symmetric Cantor sets to allow the removed intervals to move a little bit to the right or left, differently in each interval, the additive structure evaporates and Bourgain's theorem does not apply, but it's easy to see that the proof of \ref{th:cantor-simple} goes through.
\end{remark}

\begin{proof}
The minimum distance between two points of $L_n$ is $\ell_n+d_n > \ell_n$ and $\Abs{L_n} = 2^n$. (See Fig.\ \ref{fig:one-level}.)

\begin{figure}[ht]
\input 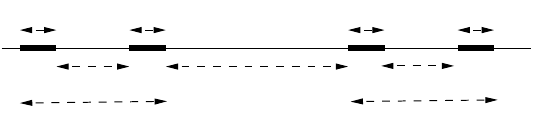_t
\caption{The $n$-th generation $C_n$}
\label{fig:one-level}
\end{figure}

If we have arbitrarily large $n_k$ such that \eqref{elln} holds then, applying Theorem \ref{th:old} to the set $L_{n_k}$, we obtain that $\bigcup_k L_{n_k}$, hence also its superset $C$, is not universal.
\end{proof}

\subsection{Sets of full measure}

If the set $A$ is countable and the set $E$ has full measure in $[0, 1]$ then it is easy to see that we can find an affine copy of $A$ in $E$ (even a translation copy of $t_0A$ where $\diam(t_0A)<1$). So it does not make sense, in general, to demand that the avoiding set $E$ has measure 1 in $[0, 1]$ instead of measure arbitrarily close to 1. This is not true if $A$ is uncountable. It is legitimate to try to avoid all affine copies of an uncountable set $A$ with a set $E \subseteq [0, 1]$ of measure 1.

In \cite{gallagher2022topological} the notion of universality is modified to account for topological ``size''. There, a set $A \subseteq \RR$ is called \textit{topologically universal} if one can find an affine copy of $A$ in any set $E \subseteq \RR$ which is a dense $G_\delta$ set. (They work in higher dimension as well.) By Baire's theorem all countable sets $A$ are topologically universal, so the interest shifts necessarily to uncountable sets.

They study Cantor sets (more generally than we do: a Cantor set is a totally disconnected, perfect compact set in Euclidean space) and their results are about Cantor sets of \textit{positive Newhouse thickness}. We refer to \cite[\S 2]{gallagher2022topological} for the precise definition of Newhouse thickness. For the class of symmetric Cantor sets that this paper is about the Newhouse thickness is the quantity
\beql{nt}
\inf_{n=1, 2, \ldots} \frac{\ell_n}{d_n}.
\eeq
If we take, for example, the usual ternary Cantor set, its Newhouse thickness is 1. Having positive Newhouse thickness roughly means that, at each stage, we do not throw away (from each interval) much more than we keep. Symmetric Cantor sets of positive Newhouse thickness have their $n$-th generation intervals $\ell_n$ decay no faster than exponentially.

Theorem 1.5 of \cite{gallagher2022topological} shows that there exists a dense $G_\delta$ set which is also a set of full Lebesgue measure (i.e.\ with null complement) which does not contain \textit{any} Cantor set of positive Newhouse thickness. This unexpected extreme non-universality is due to the so-called Newhouse gap lemma, which says that two Cantor sets, such that none of the two is contained in a ``gap'' of the other, always intersect if the product of their Newhouse thickness is at least 1. (See \cite[Lemma 3.6]{gallagher2022topological} and works cited therein for more.)

We cannot match in this paper the simultaneous avoidance character of the result in \cite{gallagher2022topological}, but we manage to go down into the zero-Newhouse-thickness territory. Our main result is the following. Let us stress here that the avoiding set is \textit{of full measure}, as in \cite{gallagher2022topological} and unlike all other cited work on the Erd\H os similarity problem. 
\begin{theorem}\label{th:main}
For any symmetric Cantor set $C$ with
\beql{elln-condition}
-\log\ell_n = o(2^{n^{1-\epsilon}}), \text{ for some $\epsilon>0$,}
\eeq
there exists a set $E \subseteq [0, 1]$ of Lebesgue measure 1 such that
\beql{not-contained}
(x+tC) \nsubseteq E
\eeq
for all $x, t \in \RR$, $t \neq 0$. 
\end{theorem}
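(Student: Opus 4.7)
The approach is to combine a Baire-category reduction with a probabilistic construction. The plan is to build, for each $m\ge 1$, an open set $U_m\subseteq[0,1]$ with $|U_m|\le 2^{-m}$ that is \emph{dense in every affine copy} $K=x+tC\subseteq[0,1]$ in the relative topology of $K$. Setting $B:=\bigcap_{m\ge 1}U_m$, we then have $|B|=0$, and since each such $K$ is a compact (hence Baire) metric space, the intersection $K\cap B=\bigcap_m(K\cap U_m)$ of countably many relatively open dense subsets of $K$ is dense in $K$, and in particular non-empty. The set $E:=[0,1]\setminus B$ then has Lebesgue measure $1$, and $(x+tC)\not\subseteq E$ for every copy $K\subseteq[0,1]$; copies that leave $[0,1]$ are trivially not contained in $E\subseteq[0,1]$. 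The relative-density condition on $U_m$ is reformulated as follows: every non-empty relatively open subset of $K=x+tC$ contains a clopen piece of the form $(x+t\alpha)+t\ell_n C^{(n)}$, where $\alpha\in L_n$ and $C^{(n)}$ denotes the sub-Cantor built from the tail ratios $r_{n+1},r_{n+2},\ldots$; hence it suffices that $U_m$ meet every affine copy in $[0,1]$ of every $C^{(n)}$, and a fortiori every affine copy of the $2^{k}$-point left-endpoint set $L^{(n)}_k$ at generation $k$ of $C^{(n)}$.

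The construction of $U_m$ is probabilistic, in the spirit of Theorem~\ref{th:cantor-simple}. Index the rounds by triples $(n,j,m)$ with $n\ge 0$, $j\ge 1$, where $j$ selects the dyadic band $t\in(2^{-j-1},2^{-j}]$. Pick a level $k=k(n,j,m)$, tile $[0,1]$ into cells of length $\delta_{n,j}$ comparable to $2^{-j}\ell_{n+k}/\ell_n$, and include each cell in $U_m$ independently with probability $p_{n,j}$. For any fixed $(x,t)$ in the $j$-th scale band, the $2^{k}$ translated points of $x+tL^{(n)}_k$ land in distinct cells, so the probability that all of them miss $U_m$ is at most $\exp(-p_{n,j}2^{k})$. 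A union bound over a grid of $O(2^{3j}(\ell_n/\ell_{n+k})^{2})$ essentially distinct $(x,t)$ pairs, together with a constant-factor enlargement of $U_m$ to pass from grid points to arbitrary points, controls the failure probability for the round $(n,j)$.

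The main obstacle is balancing the parameters. We need $\sum_{n,j}p_{n,j}\le 2^{-m}$ (so that $|U_m|\le 2^{-m}$ after derandomization), and simultaneously $p_{n,j}\cdot 2^{k}$ must dominate $m+n+j+\log(1/\ell_{n+k})$, up to constants, to beat the grid union bound. Taking $p_{n,j}=2^{-m}/(n^{2}j^{2})$ reduces this to $2^{k}\gtrsim 2^{m}\,n^{2}j^{2}\bigl(m+n+j+\log(1/\ell_{n+k})\bigr)$. The hypothesis $-\log\ell_N=o(2^{N^{1-\epsilon}})$ is tailored for this balance: choosing $k=k(n,j,m)$ to grow polynomially in $n+j+m$, one has $(n+k)^{1-\epsilon}=o(k)$, hence $\log(1/\ell_{n+k})=o(2^{k})$, and the inequality is easily met for all sufficiently large $(n,j,m)$ (the finitely many small triples being absorbed into constants). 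A routine Borel--Cantelli / derandomization argument then delivers, for each $m$, a deterministic $U_m$ with the required properties, and the Baire argument of the first paragraph completes the proof.
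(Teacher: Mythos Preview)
Your approach is correct in outline and takes a genuinely different route from the paper's, though both share the same probabilistic core (random cells, independence from separated points, union bound over a finite grid of $(x,t)$ parameters).

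The paper does \emph{not} use Baire category. It builds a decreasing sequence of compact sets $F_0\supseteq F_1\supseteq\cdots$ with $m(F_n)\to 0$ and sets $F=\bigcap_n F_n$. The key inductive invariant (``Property~A'') is that for every admissible $(x,t)$ some maximal interval of $F_n$ contains \emph{both endpoints} of an interval of $x+tC_{\phi(n)}$, where $\phi(n)=\lfloor n^{1+\eta}\rfloor$. Passing from $F_{n-1}$ to a random thinning $F_n$, the invariant survives with high probability because the interval guaranteed in $F_{n-1}$ spawns $2^{\phi(n)-\phi(n-1)}$ sub-intervals of $x+tC_{\phi(n)}$, giving that many independent chances. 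The conclusion $(x+tC)\cap F\neq\emptyset$ then follows from the finite intersection property of compact sets, after a preliminary reduction to compact parameter rectangles $[a,b]\times[A,B]$.

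Your Baire scheme instead builds each open $U_m$ from scratch (not nested) and demands the much stronger property that $U_m\cap K$ be \emph{dense} in every copy $K$. This forces you to hit every affine copy of every tail $C^{(n)}$ at every scale inside a single $U_m$, hence the double sum over $(n,j)$ and the summable weights $p_{n,j}\asymp 2^{-m}/(n^2 j^2)$. What you gain is modularity (the $U_m$'s are independent constructions) and you avoid the paper's reduction to compact $(x,t)$ boxes; what the paper gains is that at each stage only one Cantor level $\phi(n)$ is in play, so the bookkeeping is lighter and the invariant does the work of your density requirement. Two minor slips in your sketch: the grid count should carry an extra $2^{2k}$ factor (from the $2^k$ moving points), and the ``constant-factor enlargement'' is better replaced by the regions-between-lines argument the paper uses; neither affects the balance, which still closes under \eqref{elln-condition}.
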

We prove this result in \S\ref{sec:main}.

In Theorem \ref{th:main} we are not constructing a set of full measure that avoids all Cantor sets in any wide class, such as those of positive Newhouse thickness. But we are constructing, given a very thin Cantor set $C$, a set of full measure avoiding $C$.

\textbf{Example}. Let us take the Cantor set $C$ with $r_n = \frac{\ell_n}{\ell_{n-1}} = \frac{1}{n}$, a set with zero Newhouse thickness. Then
$$
\ell_n = r_1 \cdot r_2 \cdots r_n
$$
so
$$
-\log\ell_n = -\log 1 - \log \frac12 - \cdots - \log \frac1n = O(n \log n),
$$
so condition \eqref{elln-condition} is valid for this set and, therefore, by Theorem \ref{th:main}, there exists a set $E \subseteq [0, 1]$ of measure 1 containing no affine copy of $C$.

\begin{remark}
It is easily seen in the proof of Theorem \ref{th:main} that one does not need to impose such a rigid structure on the Cantor set $C$. For example, it is not necessary that the interval we throw away from each interval of $C_n$ is exactly in the middle of the interval. Many other relaxations of the assumptions are possible with the same method.
\end{remark}

\textbf{Acknowledgment:} I would like to thank Chun-Kit Lai for useful comments on the manuscript.


\section{Construction of a set of full measure}\label{sec:main}

In this section we prove Theorem \ref{th:main}. The proof is probabilistic, a modification of that used in Theorem 3 of \cite{kolountzakis1997infinite}.

We will construct the set $E^c = (-\infty, 0) \cup F \cup (1, +\infty)$ where $F \subseteq [0, 1]$ is a Lebesgue measurable set of measure 0, and $F$ will be such that whenever $x+tC \subseteq [0, 1]$ we have that $F \cap (x+tC) \neq \emptyset$.

Our first remark is that it is enough to construct such a set $F$ which achieves \eqref{intersect} below for all $a \le x \le b$, $A \le t \le B$, where $a, b, A, B$ are any fixed numbers (with $A, B \neq 0$ and of the same sign). Since we can exhaust the $(x, t)$ parameter space with a countable union of such $[a, b] \times [A, B]$ rectangles we can clearly take the union of all the sets $F$ corresponding to these rectangles and still have a set of measure 0.

So we assume $a, b, A, B$ are fixed from now on. We shall construct a compact null set $F \subseteq [0, 1]$ such that for all $x \in [a, b]$ and $t \in [A, B]$, for which $x+tC \subseteq [0, 1]$, we have
\beql{intersect}
(x+tC) \cap F \neq \emptyset.
\eeq

Our set $F$ will be the intersection of a decreasing sequence of compact sets $F_n \subseteq [0, 1]$, $n=0, 1, 2, \ldots$, such that $m(F_n) \to 0$, which implies $m(F) = 0$. We initially take $F_0 = [0, 1]$.

\begin{figure}[ht]
\input 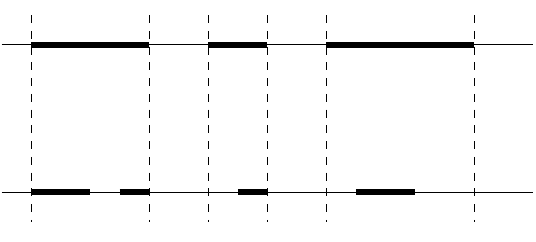_t
\caption{Two successive sets $F_n$. The set $F_n$ is a random subset of $F_{n-1}$.}
\label{fig:fn}
\end{figure}

In what follows we always assume for a pair of parameters $(x, t)$ that it is such that $x+tC \subseteq [0, 1]$.

To ensure $(x+tC) \cap F \neq \emptyset$ it is therefore enough, by the finite intesection property of compact sets, to ensure that $(x+tC) \cap F_n \neq \emptyset$ for all $n$. And for this it is enough to ensure that the set
$$
x+t \partial C_{\phi(n)} = x+t(L_{\phi(n)} \cup R_{\phi(n)})
$$
intersects $F_n$, where $\phi:\NN\to\NN$ is a strictly increasing function, that will be specified later.

Each set $F_n$ that we construct will be a finite union of disjoint closed intervals in $[0, 1]$. We call these the \textit{maximal} intervals of $F_n$.

To carry out our construction we will preserve the following property from $F_{n-1}$ to $F_n$.
\begin{quotation} \textbf{Property A:}
For all $(x, t) \in [a, b] \times [A, B]$ such that $x+tC \subseteq [0, 1]$ the set
$F_n$ contains both endpoints of an interval of $x+tC_{\phi(n)}$ in one of its maximal intervals.
\end{quotation}

\begin{figure}[ht]
\input 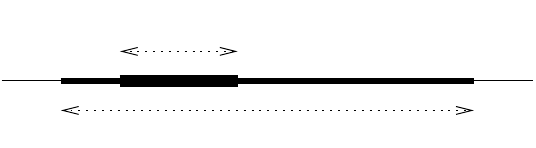_t
\caption{Some interval of $x+tC_{\phi(n)}$ is contained in some interval of $F_n$.}
\label{fig:interval}
\end{figure}

Each set $F_n$ will consist of a finite union of non-overlapping (but possibly sharing endpoints) closed intervals of length $f_n$ each. We derive $F_n$ from $F_{n-1}$ by subdividing $F_{n-1}$ into non-overlapping intervals of length $f_n$ and keeping (into $F_n$) each interval with probability $q_n \to 0$, (with $q_n < 1/4$ for all $n$) independently.

\begin{figure}[ht]
\input 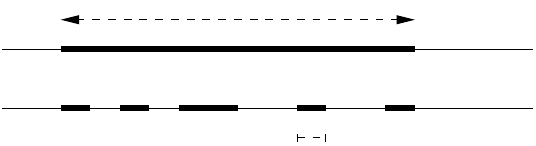_t
\caption{Some interval of $F_{n-1}$, of length $f_{n-1}$ giving rise to a random collection of intervals that make up $F_n$, each of them of length $f_n$.}
\label{fig:random}
\end{figure}

Assuming $F_{n-1}$ given and satisfying Property A we will prove that with positive probability the set $F_n$ will also satisfy Property A and its measure $m(F_n)$ will be at most half that of $m(F_{n-1})$.

By the randomized construction of $F_n$ from $F_{n-1}$ we deduce $\Mean{m(F_n)} = q_n m(F_{n-1})$ and by Markov's inequality we have
$$
\Prob{m(F_n) > 2 q_n m(F_{n-1})} < \frac12.
$$
So with probability at least $1/2$ we have
$$
m(F_n) \le (2 q_n) m(F_{n-1}) \le \frac12 m(F_{n-1}).
$$

Thus it suffices to show that Property A holds for $F_n$ with probability tending to 1 with $n$. This ensures that there exists a decreasing sequence of sets $F_n$ satisfying Property A.

We define $f_n = f_{n-1}/k$, where $k$ is the smallest integer so that $f_n \le 0.9 A \ell_{\phi(n)}$ (since we want $f_n$ to divide $f_{n-1}$). Notice that this implies
\beql{fn-range}
0.45 A \ell_{\phi(n)} \le f_n \le 0.9 A \ell_{\phi(n)}.
\eeq
With this choice we have made sure that each point of $x+t\partial C_{\phi(n)}$ belongs to a different $f_n$-length interval in the subdivision of $F_{n-1}$ and, therefore, that the events
$$
E_p:\ \ \ x+tp \in F_n,
$$
where $p \in \partial C_{\phi(n)}$,
are independent for each fixed choice of the parameters $x, t$.

The crucial observation here is that in order to make sure that Property A holds for all $(x, t)$ it is sufficient to check for only a finite set of pairs $(x, t)$. The parameter space $(x, t)$ is partitioned by the straight lines
\beql{lines}
x+ta=b,
\eeq
where $a \in \partial C_{\phi(n)}$ and $b$ is an endpoint of any $f_n$-length interval in $F_{n-1}$. The number $S_n$ of such straight lines is therefore
$$
S_n \le \Abs{\partial C_{\phi(n)}}\,m(F_{n-1}) f_n^{-1} \le 2^{\phi(n)} m(F_{n-1}) f_n^{-1},
$$
and these lines partition the $(x, t)$ space into $O(S_n^2)$ open, connected (since they are convex) regions. It is clear that it is enough to select one $(x,t)$ point in every such region. 

Indeed, suppose $(x_1, t_1)$ and $(x_2, t_2)$ are two points in one such region $R$ and consider the straight line segment connecting them, which lies completely in this region $R$, since $R$ is convex. Then we can move continuously from $(x_1, t_1)$ to $(x_2, t_2)$ along this straight line segment without ever leaving the region $R$. As $(x, t)$ carries out this motion the points of $x+t\partial C_{\phi(n)}$ never cross a subdivision point in $F_{n-1}$, as, if that happened, the point $(x, t)$ would be on one of the straight lines \eqref{lines}. Therefore, for each $p \in \partial C_{\phi(n)}$ the two events $x_1+t_1 p \in F_n$ and $x_2 + t_2 p \in F_n$ are either both true or both false.

\textbf{Remark about the non-interior points $(x, t)$:} The case where the point $(x, t)$ is actually on a dividing line causes no problems as we can always add to the final set $F$ that we construct the countable set of all dividing points, i.e.\ the integer multiples of all numbers $f_n$, for all $n$, thus ensuring that for any such $(x, t)$ the corresponding set $x+tC$ intersects $F$.

If Property A holds for all these finitely many points (one per region of the subdivision) then it holds for all pairs $(x, t)$. Thus the number of $(x,t)$ points that we have to check is
$$
O(S_n^2) = O(2^{2 \phi(n)}) m(F_{n-1})^2 f_n^{-2}  = O(2^{2 \phi(n)} \ell_{\phi(n)}^{-2}).
$$
For each of these points $(x, t)$ the probability that $F_n$ does not contain both endpoints of some interval of $x+tC_{\phi(n)}$ in one of its maximal intervals is, because of independence,
$$
\le (1-q_n^K)^{p_n(x, t)},
$$
where $p_n(x, t)$ is the number of intervals of $x+tC_{\phi(n)}$ which are contained in some maximal interval of $F_{n-1}$ and the positive integer $K$ is the maximum number of $f_n$-length intervals that are required to cover an interval of $x+tC_{\phi(n)}$. Since these intervals have maximum length (as $t$ varies in $[A, B]$) equal to $B\ell_{\phi(n)}$ and since $f_n \ge 0.45 A \ell_{\phi(n)}$, by \eqref{fn-range}, it follows that $K \le 3 B/A$.

We now observe that $p_n(x, t) \ge 2^{\phi(n)-\phi(n-1)}$, as there are $\phi(n)-\phi(n-1)$ generations of the Cantor set between $x+tC_{\phi(n-1)}$ and $x+tC_{\phi(n)}$ and each generation doubles the number of intervals and there is at least one interval of $x+tC_{\phi(n-1)}$ contained in some maximal interval of $F_{n-1}$ (by Property A, which we assume true for $n-1$). 

\begin{figure}[ht]
\input 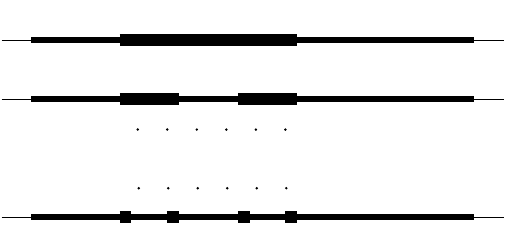_t
\caption{If some interval of $x+tC_{\phi(n-1)}$ is contained in some interval $I$ of $F_{n-1}$ then at least $2^{\phi(n)-\phi(n-1)}$ intervals of $x+tC_{\phi(n)}$ are contained in $I$.}
\label{fig:tree}
\end{figure}

The total bad probability then, that is the probability that $F_n$ (constructed at random from $F_{n-1}$ which is considered fixed and satisfying Property A) will fail to satisfy Property A, is at most
$$
(1-q_n^K)^{2^{\phi(n)-\phi(n-1)}}2^{2 \phi(n)} \ell_{\phi(n)}^{-2}.
$$
We would like this quantity to go to 0 with $n$. Taking logarithms we have that the logarithm of the above is
$$
\log(1-q_n^K) \cdot 2^{\phi(n)-\phi(n-1)} + O(\phi(n)) + O(-\log\ell_{\phi(n)}).
$$
Define now $\phi(n) = \Floor{n^{1+\eta}}$ for some positive $\eta$. This gives $\phi(n)-\phi(n-1) \ge C n^\eta$. It is enough therefore to have
$$
\log(1-q_n^K) 2^{C n^\eta} + O(n^{1+\eta}) + O(-\log\ell_{\Floor{n^{1+\eta}}}) \to -\infty.
$$
Only the first term is negative. As we can take $q_n \to 0$ as slowly as we please it follows that the above quantity tends to $-\infty$ if we have
$$
-\log\ell_{\Floor{n^{1+\eta}}} = o(2^{C n^\eta}),
$$
which is true if pick $\eta$ so that $\epsilon=\frac{1}{1+\eta}$ using \eqref{elln-condition}.

This concludes the proof of Theorem \ref{th:main}.

\bibliographystyle{amsalpha}
\bibliography{universal-sets}

\end{document}